\newtheorem{theorem}{Theorem}
\newtheorem{lemma}[theorem]{Lemma}
\newtheorem{proposition}[theorem]{Proposition}
\theoremstyle{definition}
\newtheorem{definition}[theorem]{Definition}
\theoremstyle{remark}
\newtheorem{remark}[theorem]{Remark}
\newcommand{\co}{\colon \thinspace}
\newcommand{\Q}{{\mathbb Q}}
\newcommand{\R}{{\mathbb R}}
\newcommand{\Z}{{\mathbb Z}}
\newcommand{\T}{{\mathscr T}}
\newcommand{\abs}[1]{\left| {#1} \right|}
\renewcommand{\leq}{\leqslant}
\renewcommand{\geq}{\geqslant}
\DeclareMathOperator{\scl}{scl}
\begin{document}

% \title[short text for running head]{full title}
\title[Turn Graphs and Extremal Surfaces in Free Groups]{Turn Graphs and Extremal Surfaces in Free Groups} 

%    Only \author and \address are required; other information is
%    optional.  Remove any unused author tags.

%    author one information
% \author[short version for running head]{name for top of paper}
\author{Noel Brady}
\address{Mathematics Department, University of Oklahoma\\ Norman, OK
73019, USA}
%\curraddr{}
\email{nbrady@math.ou.edu}
%\thanks{}

%    author two information
\author{Matt Clay}
\address{Mathematics Department, Allegheny College\\ Meadville, PA 16335,
USA} 
%\curraddr{}
\email{mclay@allegheny.edu}
%\thanks{}

%    author three information
\author{Max Forester}
\address{Mathematics Department, University of Oklahoma\\ Norman, OK
73019, USA}
%\curraddr{}
\email{forester@math.ou.edu}
%\thanks{}

\thanks{Partially supported by NSF grants DMS-0906962 (Brady),
DMS-1006898 (Clay), and DMS-0605137 (Forester).} 
%\subjclass[2000]{Primary }
%    The 2010 edition of the Mathematics Subject Classification is
%    now available.  If you are citing a classification from the
%    new scheme, use the following input coding instead.
\subjclass[2010]{Primary 57M07, 20F65, 20J05}

\date{}

\begin{abstract}
This note provides an alternate account of Calegari's rationality theorem
for stable commutator length in free groups. %, which avoids the theory of
%branched surfaces. 
\end{abstract}

\maketitle

%    Text of article.

\section{Introduction}

The purpose of this note is to provide an alternate account of
Calegari's main result from \cite{Calegari:free}, establishing the
existence of extremal surfaces for stable commutator length in free
groups, via linear programming. The argument presented here is similar to
that given in \cite{Calegari:free}, except that we avoid using the theory
of branched surfaces. Instead, the reduction to linear programming is
achieved directly, using the combinatorics of words in the free group. 
We note that the specific linear programming problem resulting from the 
discussion here essentially agrees with that described in Example 4.34 of
\cite{Calegari:scl}. 

\subsubsection*{Acknowledgements} The authors would like to thank Dan
Guralnik and Sang Rae Lee for helpful discussions during the course
of this work. 
%% Brady's research is funded in part by NSF grant
%% DMS-09?????, Clay's by NSF grant DMS-1006898, and Forester's by NSF grant
%% DMS-0605137. 

\section{Preliminaries} 

We start by giving a working definition of stable commutator
length. Propositions 2.10 and 2.13 of \cite{Calegari:scl} show that it is
equivalent to the basic definition in terms of commutators or genus. 

\begin{definition}\label{def1} 
Let $G = \pi_1(X)$ and suppose $\gamma \co S^1 \to X$ represents the
conjugacy class of $a\in G$. The \emph{stable commutator length of $a$}
is given by 
\begin{equation}\label{scldef} 
 \scl(a) \ = \ \inf_S \frac{-\chi(S)}{2n(S)} 
\end{equation}
where $S$ ranges over all singular surfaces $S \to X$ such that 
\begin{itemize}
\item $S$ is oriented and compact with $\partial S \not= \emptyset$ 
\item $S$ has no $S^2$ or $D^2$ components 
\item the restriction $\partial S \to X$ factors through $\gamma$; that
is, there is a commutative diagram: 
\[\begin{CD}
\partial S @>>> S \\
@VVV @VVV\\
S^1 @>{\gamma}>> X
\end{CD} \]
\item the restriction of the map $\partial S \to S^1$ to each connected
component of $\partial S$ is a map of \emph{positive} degree 
\end{itemize}
and where $n(S)$ is the total degree of the map $\partial S \to S^1$ (of
oriented $1$--manifolds). 
\end{definition}

A surface $S$ satisfying the conditions above is called a \emph{monotone
admissible surface} in \cite{Calegari:scl}, abbreviated here as an
\emph{admissible surface}. Such a surface exists if and only if $a \in 
[G,G]$. If $a \not\in [G,G]$ then by convention $\scl(a) = \infty$ (the
infimum of the empty set). 

A surface $S \to X$ is said to be \emph{extremal} if it realizes the
infimum in \eqref{scldef}. Notice that if this occurs, then $\scl(a)$ is a
rational number.

\section{Singular surfaces in graphs} \label{surfaces} 

Let $X$ be a graph with oriented $1$--cells $\{e_{\nu}\}$. These edges
may be formally considered as a generating set for the fundamental
groupoid of $X$ based at the vertices. These generators also generate the
fundamental group $F$ of $X$. Note that $F$ is free, but the groupoid
generators are not a basis unless $X$ has only one vertex. (The reader
may assume this latter property with no harm, in which case the
fundamental groupoid is simply the fundamental group.) 

Let $\gamma \co S^1 \to X$ be a simplicial loop with no
backtracking. There is a corresponding cyclically reduced word $w = x_1
\cdots x_{\ell}$ in the fundamental groupoid generators and their
inverses. This word $w$ represents a conjugacy class in $\pi_1(X)$, which
we assume to be in $[F,F]$. Finally, let $S \to X$ be an admissible
surface for $w$, as in Definition \ref{def1}. 

We are interested in computing $\chi(S)$ and $n(S)$, to estimate
$\scl(w)$ from above. We are free to modify $S$ if the resulting surface
$S'$ satisfies $\frac{-\chi(S')}{2n(S')} \ \leq
\ \frac{-\chi(S)}{2n(S)}$, since this only strengthens the estimate. 

Using transversality, the map $S \to X$ can be homotoped into a standard
form, sometimes called a transverse map \cite{BRS}. The surface is
decomposed into pieces called $1$--handles, which map to edges of $X$,
and complementary regions, which map to vertices of $X$. Each $1$--handle
is a tubular neighborhood of a connected $1$--dimensional submanifold, 
either an arc with endpoints on $\partial S$ or a circle. The 
submanifold maps to the midpoint of an edge of $X$, and the fibers of
the tubular neighborhood map over the edge, through its 
characteristic map. In particular, the boundary arcs or
circles of the $1$--handle (comprised of endpoints of fibers) map
to vertices of $X$. A \emph{transverse labeling} is a labeling of the
fibers of $1$--handles by fundamental groupoid generators, indicating
which edge of $X$ (and in which direction) the handle maps to. 
For more detail on putting maps into this form, see
for instance \cite{Rourke,Stallings,Culler,BF}. 

Let $S_0 \subset S$ be the codimension-zero submanifold obtained as the
closure of the union of a collar neighborhood of $\partial S$ and the
$1$--handles that meet $\partial S$. We will see that $S_0$ is the essential
part of $S$, containing all of the relevant information. It is 
determined completely by $\partial S$, together with the additional
data of which pairs of edges in $\partial S$ are joined by $1$--handles.
Note that $\partial S_0$ consists of $\partial S$ together with
additional components in the interior of $S$. These latter components
will be called the \emph{inner boundary} of $S_0$, denoted
$\partial^-S_0$. Let $S_1$ be the closure of $S - S_0$. Note that
$\partial S_1 = S_1 \cap S_0 = \partial^- S_0$. Figure \ref{fig:S0} shows
an example of $S_0$ for the word $w= a b a^{-1}b^{-1}$. (The ``turns''
mentioned there are discussed in the next section.) 
\begin{figure}[ht]
  \centering
%  \psfrag{t1}[cc][Bc]{{\scriptsize $1$}}
%  \psfrag{t2}[cc][Bc]{{\scriptsize $2$}}
%  \psfrag{t3}[cc][Bc]{{\scriptsize $3$}}
%  \psfrag{t4}[cc][Bc]{{\scriptsize $4$}}
%  \psfrag{d-s}[cl][Bl]{{$\partial^- S_0$}}
%  \psfrag{ds}[cl][Bl]{{$\partial S$}}
%  \psfrag{alpha}[cc][Bc]{{ $\alpha$}}
%  \psfrag{a}[Bc][Bc]{{ $a$}}
%  \psfrag{b}[Bc][Bc]{{ $b$}}
%  \psfrag{w}[cl][Bl]{{$w^3 \longrightarrow$}}
\labellist
\hair 3pt
\pinlabel* {{\scriptsize $1$}} at 31 18
\pinlabel* {{\scriptsize $2$}} at 55 18
\pinlabel* {{\scriptsize $3$}} at 79 18
\pinlabel* {{\scriptsize $4$}} at 103 18
\pinlabel* {{\scriptsize $1$}} at 127 18
\pinlabel* {{\scriptsize $2$}} at 151 18
\pinlabel* {{\scriptsize $3$}} at 175 18
\pinlabel* {{\scriptsize $4$}} at 199 18
\pinlabel* {{\scriptsize $1$}} at 223 18
\pinlabel* {{\scriptsize $2$}} at 247 18
\pinlabel* {{\scriptsize $3$}} at 271 18
\pinlabel* {{\scriptsize $4$}} at 295 18
\pinlabel* $a$ [B] at 269 67
\pinlabel* $b$ [B] at 293 67
\pinlabel {$\partial^- S_0$} [l] at 300 25
\pinlabel {$\partial S$} [l] at 300 11
\pinlabel* {$w^3 \longrightarrow$} [l] at 2 2
\pinlabel {$\alpha$} [tl] at 80 78
\endlabellist
  \includegraphics{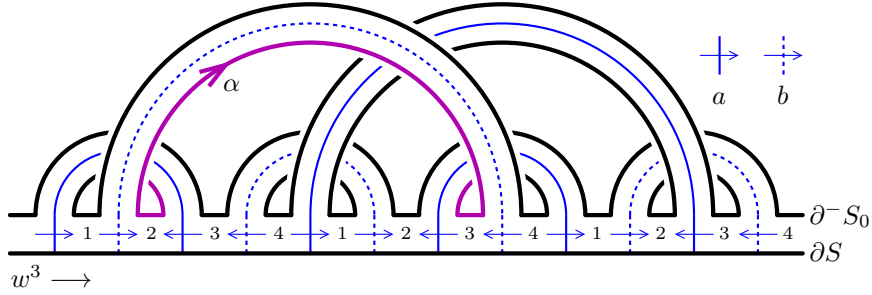} %% exported at 85%, text layer 45 omitted
  \caption{One possible $S_0$ for the word $w = aba^{-1}b^{-1}$, with
outer boundary $w^3$. There are four turns, indicated by the numbers,
each occurring three times. The boundary arc $\alpha$ leads from turn $2$
to turn $3$. This surface leads to an estimate $\scl(w) \leq 1/2$ (and in
fact is extremal).} 
  \label{fig:S0}
\end{figure}

How large can $\chi(S)$ be? Note that $\chi(S) = \chi(S_0) + \chi(S_1)$
since $S_0$ and $S_1$ meet along circles. Also, 
\[\chi(S_0) \ = \ \frac{-n(S) \abs{w}}{2}\] 
as can be seen by counting the $1$--handles meeting $\partial
S$: each $1$--handle contributes $-1$ to $\chi(S_0)$ and occupies two
edges in $\partial S$, of which there are $n(S)\abs{w}$ in total. 
Finally, given $S_0$, the quantity $\chi(S_1)$ is largest when $S_1$ is a
collection of disks. The number of disks is simply the number of
components of $\partial^- S_0$. We can always replace $S_1$ by disks,
since each component of $\partial S_1$ maps to a vertex of $X$ and disks
can be mapped to vertices also. Thus, after this modification, we have 
\begin{equation} \label{scleqn} 
\chi(S) \ = \ \frac{-n(S) \abs{w}}{2} \ + \ \abs{\pi_0   (\partial^-
S_0)},
\end{equation}
and therefore an upper bound for $\scl(w)$ is given by 
\begin{equation}\label{scldef2}
\frac{-\chi(S)}{2n(S)} \ = \ \frac{\abs{w}}{4} \ - \
\frac{\abs{\pi_0(\partial^- S_0)}}{2 n(S)} .
\end{equation}
Indeed, $\scl(w)$ is precisely the infimum of the right hand side of 
\eqref{scldef2} over all surfaces $S_0$ arising as above. (Note that
$n(S)$ is determined by $S_0$.) Equation \eqref{scldef2} essentially
replaces the quantity $\chi(S)$ by the number of inner boundary
components of $S_0$ in the computation of $\scl(w)$.

\section{The turn graph} 

To help keep track of the inner boundary $\partial^- S_0$, we define 
the \emph{turn graph}. Consider the word $w = x_1 \cdots x_{\ell}$. 
A \emph{turn} in $w$ is a position between two letters of $w$ considered
as a cyclic word. 
Turns are indexed by the numbers $1$ through $\ell$, with turn $i$ being
the position just after the letter $x_i$. 
% Thus there are $\abs{w} = \ell$ turns. 
Each turn is
labeled by the length two subword $x_i x_{i+1}$ (or $x_{\ell}x_1$) of $w$
which straddles the turn. Note that turns are not necessarily determined
by their labels. 

The \emph{turn graph} $\Gamma(w)$ is a directed graph with vertices equal
to the turns of $w$, and with a directed edge from turn $i$ to turn $j$
if $x_{i}^{-1} = x_{j+1}$. That is, if the label of a turn begins with
the letter $a^{\pm 1}$, then there is a directed edge from this turn to
every other turn whose label ends with $a^{\mp 1}$. Note that because $w$
is cylically reduced, $\Gamma(w)$ has no loops.  

The turn graph has a two-fold symmetry, or duality: if $e \in \Gamma(w)$
is an edge from turn $i$ to turn $j$, then one verifies easily that there
is also an edge $\bar{e}$ from turn $j+1$ to turn $i-1$, and moreover
$\bar{\bar{e}} = e$. Figure \ref{fig:turngraph_2} shows a turn graph and
a dual edge pair. 
\begin{figure}[ht]
  \centering
\labellist
\hair 1pt
\pinlabel* $ab$ at 111 188
\pinlabel* $ba$ at 155 176
\pinlabel* $ab$ at 187 144
\pinlabel* $b\bar{a}$ at 198 101
\pinlabel* $\bar{a}\bar{b}$ at 187 57
\pinlabel* $\bar{b}a$ at 155 25
\pinlabel* $a\bar{b}$ at 111 13
\pinlabel* $\bar{b}\bar{a}$ at 67 25
\pinlabel* $\bar{a}b$ at 35 57
\pinlabel* $b\bar{a}$ at 23 101
\pinlabel* $\bar{a}\bar{b}$ at 35 144
\pinlabel* $\bar{b}a$ at 67 176
\pinlabel {\scriptsize $1$} [l] at 126 188
\pinlabel {\scriptsize $2$} [l] at 170 176
\pinlabel {\scriptsize $3$} [l] at 202 144
\pinlabel {\scriptsize $4$} [l] at 213 101
\pinlabel {\scriptsize $5$} [l] at 202 57
\pinlabel {\scriptsize $6$} [l] at 170 25
\pinlabel {\scriptsize $7$} [l] at 126 13
\pinlabel {\scriptsize $8$} [r] at 52 25
\pinlabel {\scriptsize $9$} [r] at 20 57
\pinlabel {\scriptsize $10$} [r] at 8 101
\pinlabel {\scriptsize $11$} [r] at 20 144
\pinlabel {\scriptsize $12$} [r] at 52 176
\endlabellist
  \includegraphics{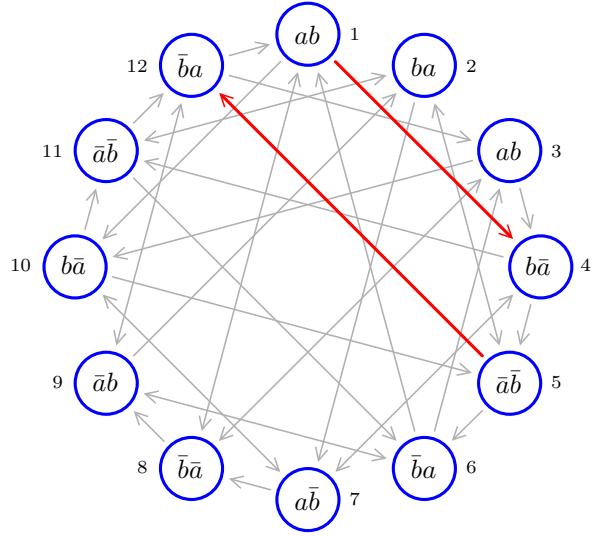} %% exported at 65%, text layer 42 omitted
  \caption{The turn graph for the word $w =
    abab\bar{a}\bar{b}a\bar{b}\bar{a}b\bar{a}\bar{b}$ (bar denotes
inverse). The highlighted edges form a dual pair.} 
  \label{fig:turngraph_2}
\end{figure}

\subsubsection*{Turn circuits} 
Given the surface $S_0$, each inner boundary component can be described
as follows. Traversing the curve in the positively oriented direction,
one alternately follows $1$--handles and visits turns of $w$
positioned along $\partial S$; see again Figure \ref{fig:S0} (this
situation is the reason for the word ``turn''). If a $1$--handle leads
from turn $i$ to turn $j$, then the $1$--handle bears the transverse
label $x_i^{-1} = x_{j+1}$, and so there is an edge in $\Gamma(w)$ from
turn $i$ to turn $j$. The sequence of $1$--handles traversed by the
boundary component therefore yields a directed circuit in $\Gamma(w)$. In
this way the inner boundary $\partial^- S_0$ gives rise to a finite
collection (possibly with repetitions) of directed circuits in
$\Gamma(w)$, called the \emph{turn circuits} for $S_0$. 

Recall that $\partial S$ is labeled by $w^{n(S)}$ (possibly spread over
several components), so there are $n(S)$ occurrences of each turn on
$\partial S$. The turn circuits do not contain the information of which
particular instances of turns are joined by $1$--handles.

\subsubsection*{Turn surgery} There is a move one can perform on $S_0$
which is useful. Given two occurrences of turn $i$ in $S_0$, cut the
collar neighborhood of $\partial S$ open along arcs positioned at the two
turns, between the adjacent $1$--handles; see Figure \ref{fig:move}. 
\begin{figure}[ht]
  \centering
%  \psfrag{x}[cr][Br]{$x_i$}
%  \psfrag{y}[cl][Bl]{$x_{i+1}$}
%  \psfrag{b}[cr][cr]{$\partial^- S_0$}
%  \psfrag{s}[cr][Br]{$\partial S$}
\labellist
\hair 3pt
\pinlabel {$x_i$} [Br] at 34 101
\pinlabel {$x_i$} [Br] at 34 57
\pinlabel {$x_{i+1}$} [tl] at 138 65
\pinlabel {$x_{i+1}$} [tl] at 138 21
\pinlabel {$\partial^- S_0$} [Br] at 19 121
\pinlabel {$\partial S$} [Br] at 10 93
\pinlabel {$\partial S$} [Br] at 10 49
\endlabellist
  \includegraphics{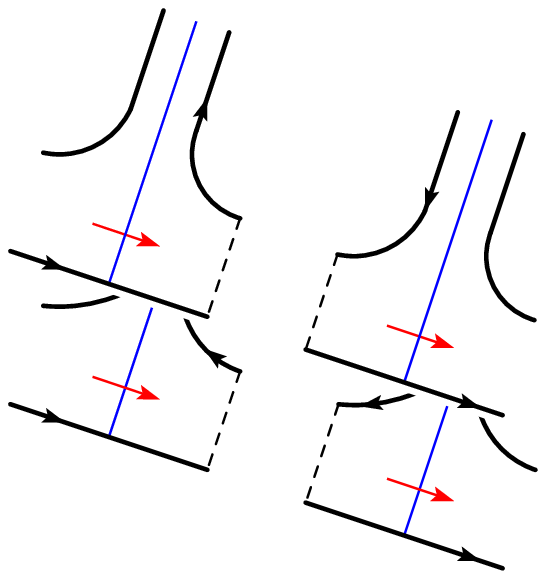} %% exported at 70%, text layer 48 omitted 
  \caption{Cutting along two instances of turn $i$.}
  \label{fig:move}
\end{figure}
These arcs both map to the same vertex of $X$. Now re-glue the four sides
of the arcs, switching two of them. There is one way to do this which
preserves orientations of $S$ and of $\partial S_0$. The new surface is
still admissible (that is, after capping off $\partial^- S_0$) and $n(S)$
is preserved. 

The move changes both $\partial S$ and $\partial^- S_0$, in each
case either increasing or decreasing the number of connected components
by one. If both instances of the turn occupy the same component, then the
move splits this component into two, with each occupied by one of the
turns. Otherwise, the move joins the two components occupied by the turns
into one. 

\begin{definition} An admissible surface $S$ is \emph{taut} if every
component of $\partial^- S_0$ visits each turn at most once. In terms of
the turn graph, this means that each turn circuit for $S_0$ is embedded
in $\Gamma(w)$ (though distinct circuits are allowed to cross). Let
$\T(w)$ be the set of taut admissible surfaces for $w$. 
\end{definition}

Any admissible surface $S$ can be made taut by performing a finite number
of turn surgeries, each increasing the number of inner boundary
components of $\partial S_0$. Since $n(S)$ remains constant, the quantity
\eqref{scldef2} will only decrease. Hence we have the following result: 

\begin{lemma} \label{tautlemma} 
There is an equality 
\[{\displaystyle \scl(w) \ \ = \ \ \inf_{S \in \T(w)}
\ \frac{\abs{w}}{4} \ - \ \frac{\abs{\pi_0(\partial^- S_0)}}{2 n(S)} .}\]  
\end{lemma}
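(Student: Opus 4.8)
The plan is to show that the taut surfaces achieve the same infimum as appears in equation \eqref{scldef2}. Since every taut admissible surface is in particular an admissible surface, the discussion following \eqref{scldef2} gives
\[
\scl(w) \ \leq \ \inf_{S \in \T(w)} \ \frac{\abs{w}}{4} \ - \ \frac{\abs{\pi_0(\partial^- S_0)}}{2 n(S)},
\]
so the content of the lemma is the reverse inequality. For this it suffices to take an arbitrary admissible surface $S$ realizing the quantity $\frac{-\chi(S)}{2n(S)}$ close to $\scl(w)$, put it in the normal form described in Section \ref{surfaces} (transverse map, $1$--handles, complementary disks, so that \eqref{scldef2} holds), and then convert it into a taut surface without increasing \eqref{scldef2}.

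**The main step** is the taut-ification via turn surgery. Given $S$ in normal form, if some component of $\partial^- S_0$ visits a turn $i$ more than once, pick two such occurrences and perform the turn surgery described before the definition of tautness. By the analysis given there, $n(S)$ is unchanged, the surface remains admissible after capping $\partial^- S_0$ with disks, and — because the two occurrences of turn $i$ lie in the \emph{same} component of $\partial^- S_0$ — the move splits that component into two, so $\abs{\pi_0(\partial^- S_0)}$ strictly increases by one. Consulting \eqref{scldef2}, the right-hand side $\frac{\abs{w}}{4} - \frac{\abs{\pi_0(\partial^- S_0)}}{2n(S)}$ therefore strictly decreases. The key finiteness observation is that $\abs{\pi_0(\partial^- S_0)}$ is bounded above: each inner boundary component must contain at least one $1$--handle, equivalently at least one turn, and there are exactly $n(S)\abs{w}$ turn-occurrences along $\partial S$ in total, so $\abs{\pi_0(\partial^- S_0)} \leq n(S)\abs{w}$. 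Hence only finitely many surgeries can be performed before no component repeats a turn, at which point the surface is taut. (One should also re-cap $\partial^- S_0$ with disks after the surgeries, which only helps \eqref{scleqn}; this is the same move already justified in Section \ref{surfaces}.)

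Combining: starting from $S$ with $\frac{-\chi(S)}{2n(S)} < \scl(w) + \varepsilon$, the taut surface $S'$ produced satisfies $\frac{-\chi(S')}{2n(S')} \leq \frac{-\chi(S)}{2n(S)} < \scl(w) + \varepsilon$ and $S' \in \T(w)$, so $\inf_{S \in \T(w)} \frac{-\chi(S)}{2n(S)} \leq \scl(w)$. Rewriting $\frac{-\chi(S')}{2n(S')}$ in the form of \eqref{scldef2} — legitimate because $S'$ is in normal form with disk components for $S_1$ — gives the stated formula.

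**The step I expect to be the main obstacle** is making sure the bookkeeping in the turn-surgery move is airtight: specifically, that when the two chosen occurrences of turn $i$ lie on the same component of $\partial^- S_0$ the move genuinely \emph{splits} rather than \emph{merges} (so the count goes up, not down), and that orientations can consistently be chosen throughout so the resulting capped surface is still admissible with the same $n(S)$. The excerpt asserts both of these in the "Turn surgery" paragraph, so I would simply cite that discussion rather than reprove it; the only remaining care is the termination argument, which the bound $\abs{\pi_0(\partial^- S_0)} \leq n(S)\abs{w}$ handles cleanly.
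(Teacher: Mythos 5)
Your argument is essentially the paper's own proof: the lemma follows from the observation that finitely many turn surgeries (each splitting a component, keeping $n(S)$ fixed, and hence only decreasing the quantity in \eqref{scldef2}) convert any admissible surface into a taut one, combined with the fact that $\scl(w)$ is the infimum of \eqref{scldef2} over all admissible surfaces. Your explicit termination bound $\abs{\pi_0(\partial^- S_0)} \leq n(S)\abs{w}$ is a nice addition that the paper leaves implicit, but the approach is the same.
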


\section{Weight vectors and linear optimization}

Let $\{\alpha_1, \ldots, \alpha_k\}$ be the set of embedded directed
circuits in $\Gamma(w)$. For each taut admissible surface $S$ let
$u_i(S)$ be the number of occurrences of $\alpha_i$ among the turn
circuits of $S_0$, and let $u(S) \in \R^k$ be the non-negative integer
vector $(u_1(S), \ldots, u_k(S))$. We call $u(S)$ the \emph{weight
vector} for $S$. 

For each vertex $v$ and edge $e$ of $\Gamma(w)$, there are linear
functions
\[ F_v\co \R^k \to \R, \quad F_e \co \R^k \to \R\]
whose values on the $i^{th}$ standard basis vector $(0, \ldots, 0, 1, 0,
\ldots 0)$ are given by the number of times $\alpha_i$ passes through the
vertex $v$ (respectively, over the edge $e$). Since $\alpha_i$ is
embedded, these numbers will be $0$ or $1$, although this is not
important. For the taut surface $S$, if $e \in \Gamma(w)$ is an edge from
turn $i$ to turn $j$, then $F_e(u(S))$ counts the number of times
$\partial^- S_0$ follows a $1$--handle from turn $i$ to turn
$j$. Similarly, if $v \in \Gamma(w)$ is turn $i$, then $F_v(u(S))$ counts
the number of occurrences of turn $i$ on $\partial^- S_0$ (which is
$n(S)$, as observed earlier). 

\begin{remark} \label{remark} 
For taut admissible surfaces, the functions $\abs{\pi_0(\partial^- S_0)}$
and $n(S)$ both factor as 
\[  \T(w) \overset{u}{\longrightarrow} \R^k \longrightarrow \R \]
where the second map is linear, with integer coefficients. In the case of
$\abs{\pi_0(\partial^-   S_0)}$ the second map is given by $(u_1, \ldots,
u_k) \mapsto \sum_iu_i$, and in the case of $n(S)$, the second map is
simply $F_v$ (for any vertex $v \in \Gamma(w)$). 

By \eqref{scleqn} it follows that the function $-\chi(S)$ also factors as
above, through an integer coefficient linear function $\R^k \to \R$. 
\end{remark}

\begin{lemma}\label{eqnlemma} 
Every weight vector $u(S)$ satisfies the linear equation 
\[F_e(u(S)) = F_{\bar{e}}(u(S))\]
for every dual pair $e, \bar{e}$ of edges in $\Gamma(w)$. 
\end{lemma}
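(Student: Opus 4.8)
The plan is to interpret both sides of the claimed equality geometrically on the inner boundary $\partial^- S_0$ and show they count the same thing. Recall from the discussion preceding the lemma that $F_e(u(S))$ counts the number of $1$--handles that $\partial^- S_0$ traverses from turn $i$ to turn $j$, where $e$ runs from turn $i$ to turn $j$. By the duality construction in Section 4, the dual edge $\bar e$ runs from turn $j+1$ to turn $i-1$, so $F_{\bar e}(u(S))$ counts the $1$--handles that $\partial^- S_0$ traverses from turn $j+1$ to turn $i-1$. So the task is to produce a bijection between these two families of $1$--handle traversals.

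First I would unwind what an edge of $\Gamma(w)$ records about an actual $1$--handle in $S_0$. A $1$--handle meeting $\partial S$ joins two arcs on $\partial S$, and if it bears transverse label equal to the letter $x_i^{-1} = x_{j+1}$, then on one side of the handle $\partial^- S_0$ passes turn $i$ and on the other side it passes turn $j$; but the \emph{same} $1$--handle, viewed from the $\partial S$ side rather than the $\partial^- S_0$ side, is flanked by the turns $i-1$ and $j+1$ (these are the turns sitting on the outer boundary $\partial S$ immediately before the letters $x_i$ and $x_{j+1}$ that the handle straddles). Thus every $1$--handle of $S_0$ simultaneously realizes an edge $e$ of $\Gamma(w)$ read along $\partial^- S_0$ and its dual edge $\bar e$ read along $\partial S$. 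The key step is therefore to make this ``two sides of one handle'' correspondence precise: each $1$--handle contributing to $F_e(u(S))$ contributes, via the opposite pair of corners, to $F_{\bar e}(u(S))$, and this assignment is a bijection because $\bar{\bar e} = e$ makes it an involution up to swapping the roles of the two boundaries.

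Carrying this out, I would fix a dual pair $e, \bar e$ and set up the two sets explicitly: $A_e = \{\text{$1$--handles $h$ of $S_0$ such that the arc of $\partial^- S_0$ along $h$ goes from turn $i$ to turn $j$}\}$ and $A_{\bar e}$ defined analogously for the edge from $j+1$ to $i-1$. The map $h \mapsto h$ is a well-defined bijection $A_e \to A_{\bar e}$ once one checks that a handle in $A_e$ is automatically flanked on the $\partial S$ side by turns $i-1$ and $j+1$ in the correct cyclic order — this is exactly the orientation bookkeeping of the transverse map, using that $w$ is cyclically reduced so that turns $i-1$, $i$ and $j$, $j+1$ are genuinely distinct positions. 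Since $F_e(u(S)) = \abs{A_e}$ and $F_{\bar e}(u(S)) = \abs{A_{\bar e}}$ by the interpretation recalled above, the equality follows.

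The main obstacle will be the orientation and indexing care in the previous paragraph: one must verify that the ``other side'' of a handle realizing $e$ really realizes $\bar e$ and not some other edge, and that the degree and orientation conventions in Definition \ref{def1} are consistent with the direction in which $\partial^- S_0$ and $\partial S$ are traversed. Once the picture of a single $1$--handle with its four flanking turns $i-1, i, j, j+1$ is drawn correctly (as in Figure \ref{fig:move}), the bijection is immediate and the lemma follows.
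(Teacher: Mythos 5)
Your argument is correct and is essentially the paper's own proof: both $F_e(u(S))$ and $F_{\bar{e}}(u(S))$ equal the number of $1$--handles of $S_0$ joining occurrences of $x_i$ and $x_{j+1}$ in $\partial S$, because the two side arcs of any such handle realize $e$ and $\bar{e}$ respectively, so your identity bijection $A_e \to A_{\bar{e}}$ is exactly the paper's observation. One phrasing slip worth fixing: $\bar{e}$ is not ``read along $\partial S$'' --- both side arcs of the handle lie on $\partial^- S_0$, one running from turn $i$ to turn $j$ and the other from turn $j+1$ to turn $i-1$ (note also that the turns immediately before $x_i$ and $x_{j+1}$ are $i-1$ and $j$, not $i-1$ and $j+1$) --- but your explicit definitions of $A_e$ and $A_{\bar{e}}$ are stated in terms of $\partial^- S_0$ and are correct, so this does not affect the argument.
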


\begin{proof}
Suppose $e$ leads from turn $i$ to
turn $j$ (so $\bar{e}$ leads from turn $j+1$ to turn $i-1$). If a
$1$--handle has a boundary arc representing $e$ then the other side of
the $1$--handle represents $\bar{e}$. Hence both sides of the equation
count the number of $1$--handles of $S_0$ joining occurrences of $x_i$
and $x_{j+1}$ in $\partial S$. 
\end{proof}

This lemma has a converse: 

\begin{proposition} \label{eqnprop} 
If $u \in \R^k - \{0\}$ has non-negative integer entries and satisfies the
linear equations 
\begin{equation} \label{subspace} 
F_e(u) = F_{\bar{e}}(u) \ \ \text{ for all dual pairs } e, \bar{e} 
\end{equation}
then $u$ is the weight vector of a taut admissible surface. 
\end{proposition}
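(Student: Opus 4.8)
The plan is to run the discussion of Section~\ref{surfaces} backwards: from $u$ I would assemble a surface $S_0$ out of $1$--handles and collar pieces whose inner boundary realizes a prescribed family of turn circuits, and then cap off $\partial^- S_0$ with disks. Write $C$ for the multiset of circuits in $\Gamma(w)$ consisting of $u_i$ copies of $\alpha_i$ for each $i$. The goal is to produce a taut admissible surface $S$ whose turn circuits are exactly $C$; then $u_i(S) = u_i$ for all $i$, and $S$ is automatically taut because each $\alpha_i$ is embedded.

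\emph{The degree.} The first point is that $F_v(u)$ is independent of the vertex $v$; call the common value $n$, which will be $n(S)$. Since each $\alpha_i$ is a closed directed circuit it departs from a vertex exactly as often as it visits it, and likewise arrives exactly as often, so, writing $v_i$ for the vertex of $\Gamma(w)$ that is turn $i$, one has $\sum_{e\text{ out of }v_i} F_e(u) = F_{v_i}(u) = \sum_{e\text{ into }v_i} F_e(u)$. The duality $e\mapsto\bar e$ carries the edges out of turn $i$ bijectively onto the edges into turn $i-1$, so \eqref{subspace} gives $F_{v_i}(u) = \sum_{e\text{ out of turn }i} F_e(u) = \sum_{e\text{ out of turn }i} F_{\bar e}(u) = \sum_{f\text{ into turn }i-1} F_f(u) = F_{v_{i-1}}(u)$. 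As the turns are cyclically indexed this forces $F_v(u)$ to be constant, and $n\geq 1$ because $u\neq 0$ has non-negative integer entries. In particular $\partial S$ will be non-empty.

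\emph{The handles and the gluing.} The circuits of $C$ traverse each edge $f$ of $\Gamma(w)$ a total of $F_f(u)$ times. For each dual pair $\{e,\bar e\}$ with $e$ running from turn $i$ to turn $j$, the hypothesis \eqref{subspace} says that $e$ and $\bar e$ are traversed equally often by $C$, so I would fix a bijection between the traversals of $e$ and those of $\bar e$ and declare each matched pair to be one $1$--handle, with transverse label $x_i^{-1} = x_{j+1}$, fiber--ends reading $x_i$ and $x_{j+1}$, and its two free sides realizing the edges $e$ and $\bar e$. By the previous step there are exactly $n$ fiber--ends reading each letter $x_m$, hence $n\abs{w}$ fiber--ends and $n\abs{w}/2$ handles altogether. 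Now assemble: each circuit $\alpha\in C$ prescribes, at every turn it visits, which $1$--handle precedes and which follows it along $\partial^- S_0$, and at that turn we glue in a corner region of a complementary $0$--handle joining the relevant free sides of these two handles and joining their fiber--ends into $\partial S$. This is exactly the local picture of Figure~\ref{fig:move}, carried out in the orientation--preserving way. Reading $\partial S$ across a turn $i$ passes from an $x_i$--slot to the following $x_{i+1}$--slot, so each component of $\partial S$ reads a positive power of $w$ and $\partial S$ reads $w^n$ in all. Capping $\partial^- S_0$ with disks yields a map $S\to X$ which is admissible --- there is no $S^2$ component since every component of $S_0$ contains a $1$--handle and so meets $\partial S$, and no $D^2$ component since its boundary would make a positive power of the (infinite--order) element $w$ trivial --- is taut, and has turn circuits $C$; thus $u(S) = u$.

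The step I expect to be the real obstacle is the assembly. One must check that the local orientation--preserving gluings are globally consistent (so that $S_0$ is genuinely oriented) and that the resulting $\partial S$ reads an honest power $w^n$ rather than a mixture of positive and negative powers on different components --- in short, that the transverse normal form of Section~\ref{surfaces} can indeed be reconstructed from the combinatorial data of $C$. By contrast, the algebraic hypothesis \eqref{subspace} is used only in the first two steps: it is what makes $n$ well defined and what allows the traversals of $e$ and $\bar e$ to be paired into $1$--handles. The particular choices of those pairings alter $S_0$ but not its turn circuits, hence not the weight vector, so it does not matter which surface the construction produces.
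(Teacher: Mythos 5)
Your construction is essentially the paper's own proof viewed from the $S_0$ side: the paper realizes each $\alpha_i$ (taken $u_i$ times) as an oriented polygonal disk labeled by the circuit, uses \eqref{subspace} to match sides labeled $e$ with sides labeled $\bar{e}$, and inserts the $1$--handles as rectangles with sides $e$, $x_{j+1}$, $\bar{e}$, $x_i$, so that $\partial S$ reads positive powers of $w$ exactly as in your turn-by-turn argument, and the turn circuits are the prescribed $\alpha_i$'s. The orientation consistency you flag as the main obstacle is resolved just as you anticipate (the directed circuits together with the duality $e \mapsto \bar{e}$ force the antiparallel gluing along each handle, which is how the paper can assert the glued-up surface is oriented); your preliminary step that $F_v(u)$ is constant is correct but not strictly needed, and the paper rules out $D^2$ components by the no-monogon observation in $\Gamma(w)$ rather than by the infinite order of $w$, though both arguments work.
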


\begin{proof}
Suppose $u = (u_1, \ldots ,u_k)$. For each $i$ let $D_i$ be a polygonal
disk with $\abs{\alpha_i}$ sides. Label the oriented boundary 
of $D_i$ by the edges and vertices of $\alpha_i$. That is, sides are
labeled by edges of $\Gamma(w)$, and corners are labeled by turns. 
Note that there are no monogons, since $\Gamma(w)$ has no
loops. To form the taut 
admissible surface $S$, take $u_i$ copies of $D_i$ for each $i$. For each
dual edge pair $e$, $\bar{e}$ the total number of edges labeled $e$
among the $D_i$'s will equal the number of edges labeled $\bar{e}$,
by \eqref{subspace}. Hence the sides of the disks can be
joined in dual pairs to form a closed oriented surface. 

However, this is not how $S$ is formed. Instead, whenever two disks were
to be joined along sides labeled $e$ and $\bar{e}$, insert an
oriented rectangle, with sides labeled by $e$, $x_{j+1}$, $\bar{e}$,
$x_i$ (here, $e$ leads from turn $i$ to turn $j$, and $\bar{e}$ from turn
$j+1$ to turn $i-1$). See Figure \ref{fig:realization}. The opposite
sides labeled by 
$e$ and $\bar{e}$ are joined to the appropriate sides of the disks, and
the remaining two sides become part of the boundary of $S$. Each
rectangle can be transversely labeled by a fundamental groupoid generator
(equal to $x_i^{-1} = x_{j+1}$), and then the rectangles become
$1$--handles in the resulting surface $S$. 

\begin{figure}[ht]
  \centering
%  \psfrag{e}{$e$}
%  \psfrag{eb}{$\bar{e}$}
%  \psfrag{i}{$i$}
%  \psfrag{j}{$j$}
%  \psfrag{ip1}{$i+1$}
%  \psfrag{im1}{$i-1$}
%  \psfrag{jp1}{$j+1$}
%  \psfrag{s}{$\partial S$}
%  \psfrag{xi}{$x_i$}
%  \psfrag{xj}{$x_j$}
%  \psfrag{xi1}{$x_{i+1}$}
\labellist
%\hair 2pt
\pinlabel {$\partial S$} [r] at 36 65
\pinlabel {$e$} [r] at 140 71
\pinlabel {$\bar{e}$} [l] at 193 71
\pinlabel {$x_{j+1}$} [t] at 166 22
\pinlabel {$x_{i}$} [b] at 166 122
\pinlabel* {$x_{i+1}$} at 138 143
\pinlabel {$\partial S$} [r] at 155 3
\pinlabel {$\partial S$} [l] at 172 143
\pinlabel* {$i$} at 136 104
\pinlabel* {$i-1$} at 205 114
\pinlabel* {$j+1$} at 205 32
\pinlabel* {$j$} at 136 29
\pinlabel* {$i+1$} at 95 157
\endlabellist
  \includegraphics{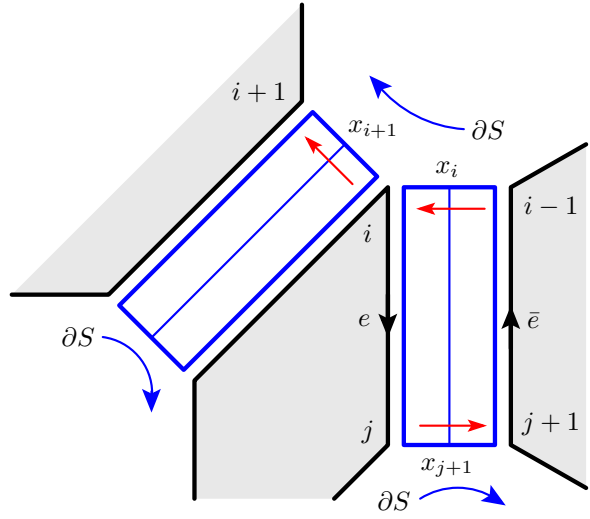} %% exported at 90%
  \caption{Building $S$ from $(u_1, \ldots, u_k)$.}
  \label{fig:realization}
\end{figure}

Note that the side of a rectangle labeled $x_{i}$ has neighboring
polygonal disk corners labeled $i-1$ and $i$. Following this edge 
along $\partial S$, the next edge must be labeled $x_{i+1}$ (adjacent to
$i$ and $i+1$); see again Figure \ref{fig:realization}. Hence each
component of  
$\partial S$ is labeled by a positive power of $w$. There are no $S^2$
components since no component of $S$ is closed, and no $D^2$ components, 
since an outermost $1$--handle on such a disk would have to bound a
monogon. The map $S \to X$ is defined on
the rectangles according to the transverse labels (each maps to an edge
of $X$) and the disks map to vertices. Now $S$ is admissible, and by
construction, the turn circuits will all be instances of the circuits
$\alpha_i$, so $S$ is taut. 
\end{proof}

\begin{theorem}[Calegari] 
If $X$ is a graph and $a \in [\pi_1(X), \pi_1(X)]$ then there exists an
extremal surface $S \to X$ for $a$. Moreover, there is an algorithm to 
construct $S$. In particular, $\scl(a)$ is rational and computable. 
\end{theorem}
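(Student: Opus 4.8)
The plan is to reduce the computation of $\scl(a)$ to a rational linear programming problem and then to exhibit an optimal solution of that program as a taut admissible surface. Since $\scl$ depends only on the conjugacy class of $a$, I would first homotope a representative loop to remove backtracking and reduce to the case that $a$ is represented by a nontrivial cyclically reduced word $w = x_1 \cdots x_\ell$, as in Section~\ref{surfaces} (if $a = 1$ then $\scl(a) = 0$). Next I would form the turn graph $\Gamma(w)$ — a finite directed graph, since $\ell < \infty$ — list its embedded directed circuits $\alpha_1, \dots, \alpha_k$, and introduce the rational polyhedral cone
\[ C \ = \ \bigl\{\, u \in \R^k \ : \ u \geq 0 \ \text{ and } \ F_e(u) = F_{\bar e}(u) \ \text{ for every dual pair } e, \bar e \,\bigr\}. \]
By Lemma~\ref{eqnlemma} and Proposition~\ref{eqnprop} the weight vectors of taut admissible surfaces are exactly the nonzero vectors of $C \cap \Z^k$. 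Fixing a vertex $v$ of $\Gamma(w)$ and setting $P = C \cap \{u : F_v(u) = 1\}$, a rational polyhedron, I would combine Lemma~\ref{tautlemma} with Remark~\ref{remark} — which identifies $\abs{\pi_0(\partial^- S_0)}$ with $\sum_i u_i(S)$ and $n(S)$ with $F_v(u(S))$ — together with the scale-invariance of the ratio $\sum_i u_i / F_v(u)$, to obtain
\[ \scl(w) \ = \ \frac{\abs{w}}{4} \ - \ \frac{1}{2}\, M, \qquad M \ = \ \sup_{u \in P}\ \sum_i u_i, \]
the point being that each weight vector $u(S)$ rescales to $u(S)/n(S) \in P$ with the same objective value, while each rational point of $P$ clears denominators to a weight vector.

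The key step — and the one I expect to be the main obstacle — is showing that $P$ is \emph{bounded}, so that $M$ is finite and attained. I would argue that the recession cone of $P$ is $C \cap \ker F_v$, again a rational polyhedral cone; if it were nontrivial it would contain a nonzero integer vector $\tilde u$, which by Proposition~\ref{eqnprop} would be the weight vector of a taut admissible surface $\tilde S$ with $n(\tilde S) = F_v(\tilde u) = 0$, contradicting $n(\tilde S) \geq 1$. Hence the recession cone is $\{0\}$ and $P$ is a bounded rational polytope. (Equivalently, the content is that $F_v$ is strictly positive on $C \setminus \{0\}$.)

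With boundedness in hand the rest is routine. The linear program $\max_{u \in P} \sum_i u_i$ attains its maximum at a vertex $u^{*}$ of $P$, which is rational; thus $M = \sum_i u^{*}_i \in \Q$ and $\scl(w) = \abs{w}/4 - M/2$ is rational. Clearing denominators gives a nonzero vector $u^{\dagger} = N u^{*} \in C \cap \Z^k$ for a suitable $N \in \Z_{>0}$, to which Proposition~\ref{eqnprop} associates a taut admissible surface $S^{*}$; since $\sum_i u_i(S^{*})/n(S^{*}) = M$, the surface $S^{*}$ attains the infimum in Lemma~\ref{tautlemma} and is therefore extremal. Finally I would observe that every step is effective — building $\Gamma(w)$ and enumerating the $\alpha_i$ from $w$, writing down the finitely many linear (in)equalities cutting out $P$, solving the linear program over $\Q$, and assembling $S^{*}$ from $u^{\dagger}$ exactly as in the proof of Proposition~\ref{eqnprop} — which yields the claimed algorithm and shows that $\scl(a)$ is computable.

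Besides the boundedness of $P$, the only other point I expect to demand a little care is that it is an optimal \emph{vertex} of the linear program, rather than an arbitrary optimal point, that is rational and hence corresponds — after clearing denominators — to an honest surface via Proposition~\ref{eqnprop}; everything else is bookkeeping layered on the earlier lemmas.
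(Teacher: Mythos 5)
Your proposal is correct and follows essentially the same route as the paper: the same polyhedron cut out by the dual-pair equations \eqref{subspace} and nonnegativity, normalized so that $n(S)$ is constant, with Lemma \ref{tautlemma}, Remark \ref{remark}, Lemma \ref{eqnlemma} and Proposition \ref{eqnprop} identifying lattice points with weight vectors of taut surfaces, an optimal rational vertex, and clearing denominators plus the construction of Proposition \ref{eqnprop} to produce the extremal surface. The only step handled differently is attainment: the paper normalizes $B(u)=1$ and asserts that the objective $A$ attains its minimum on $P'$ because it is positive on the standard basis vectors, while you prove your slice $P$ is compact via the recession cone together with Proposition \ref{eqnprop} (no nonzero lattice vector of the cone can have $F_v = 0$, since that would give an admissible surface with $n=0$) --- a sound, and in fact slightly more careful, justification of the same point, since by \eqref{scleqn} the true objective is $\frac{\abs{w}}{2}F_v(u) - \sum_i u_i$ and boundedness of the slice is exactly what is needed.
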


\begin{proof} Let $w$ be the cyclically reduced word representing the
conjugacy class of $a$, as defined in Section \ref{surfaces}. 
By Remark \ref{remark} the function $\frac{-\chi(S)}{2n(S)}$ factors as 
\[\T(w) \overset{u}{\longrightarrow} \R^k {\longrightarrow} \R\]
where the second map is a ratio of linear functions $A(u)/B(u)$ with
integer coefficients. Let $P \subset \R^k$ be the polyhedron defined by
the (integer coefficient) linear equations \eqref{subspace} and the
inequalities $B(u) > 0$ and $u_i \geq 0$, $i = 1, \ldots, k$. Lemma
\ref{eqnlemma} and Proposition \ref{eqnprop} together imply that 
the image of $u \co \T(w) \to \R^k$ is precisely $P \cap \Z^k$. Hence 
\begin{equation*}
\scl(w) \ \ = \ \ \inf_{u \in P\cap \Z^k} \ {A(u)}/{B(u)}. 
\end{equation*}
Note that $P$ and $A(u)/B(u)$ are projectively invariant. Normalizing
$B(u)$ to be $1$, we have  
\begin{equation}\label{scldef3} 
\scl(w) \ \ = \ \ \inf_{u \in P' \cap \Q^k} \ A(u)
\end{equation} 
where $P'$ is the rational polyhedron $P \cap B^{-1}(1)$. Note that $P'$
is a closed set. 

From Remark \ref{remark} and equation \eqref{scleqn} the function $A$ is
given by 
\[ A(u) \ \ = \ \ \frac{\abs{w}}{2} F_v(u) + \sum_i u_i\, ,  \]
which has strictly positive values on the standard basis vectors. 
Hence $A$ achieves a minimum on $P'$, along a non-empty rational
sub-polyhedron. The vertices of this sub-polyhedron are rational points
realizing the infimum in \eqref{scldef3}. Hence there exist extremal
surfaces for $w$, and $\scl(w)$ is rational. An extremal surface 
can be constructed explicitly from a rational solution $u \in P'\cap
\Q^k$, by first multiplying by an integer to obtain a minimizer for $A(u)/B(u)$
in $P \cap \Z^k$, and then applying the procedure given in the proof of
Proposition \ref{eqnprop}. Lastly, we note that from the word $w$ it is
straightforward to algorithmically construct the turn graph $\Gamma(w)$,
the equations \eqref{subspace}, and the polyhedron $P'$. 
\end{proof}

%    Bibliographies can be prepared with BibTeX using amsplain,
%    amsalpha, or (for "historical" overviews) natbib style.
\bibliographystyle{amsplain}
%    Insert the bibliography data here.
%\bibliography{bib}

\providecommand{\bysame}{\leavevmode\hbox to3em{\hrulefill}\thinspace}
\providecommand{\MR}{\relax\ifhmode\unskip\space\fi MR }
% \MRhref is called by the amsart/book/proc definition of \MR.
\providecommand{\MRhref}[2]{%
  \href{http://www.ams.org/mathscinet-getitem?mr=#1}{#2}
}
\providecommand{\href}[2]{#2}

\end{document}